\definecolor{mygreen}{RGB}{10, 128, 128}
\definecolor{mybro}{RGB}{178, 34, 34}
\definecolor{myred}{RGB}{150, 24, 24}
\newtheorem{theorem}{Theorem}
\newtheorem{definition}{Definition}
\newtheorem{lemma}{Lemma}
\newtheorem{proposition}{Proposition}
\newtheorem{remark}{Remark}
\let\NAT@parse\undefined
\title{\LARGE \bf A Fast Certificate for Power System Small-Signal Stability}
\author{Amin~Gholami and Xu~Andy~Sun 
\thanks{The authors are with the H. Milton Stewart School of Industrial and Systems Engineering, Georgia Institute of Technology, Atlanta, GA 30332 USA (e-mail: {\tt\small a.gholami@gatech.edu}; {\tt\small andy.sun@isye.gatech.edu}).}
}
\begin{document}	
\maketitle

\thispagestyle{plain}
\pagestyle{plain}


\begin{abstract}
Swing equations are an integral part of a large class of power system dynamical models used in rotor angle stability assessment. Despite intensive studies, some fundamental properties of lossy swing equations are still not fully understood. In this paper, we develop a sufficient condition for certifying the stability of equilibrium points (EPs) of these equations, and illustrate the effects of damping, inertia, and network topology on the stability properties of such EPs. The proposed certificate is suitable for real-time monitoring and fast stability assessment, as it is purely algebraic and can be evaluated in a parallel manner. Moreover, we provide a novel approach to quantitatively measure the degree of stability in power grids using the proposed certificate. Extensive computational experiments are conducted, demonstrating the practicality and effectiveness of the proposal.
%
\end{abstract}



\section{Introduction}
Power system stability has been an important topic in power engineering for many years. There has been continuing advancement in the understanding of the stability issues of the system. In the recent decade, the proliferation of renewable energy resources has added new dimensions to the problem. The uncertainty and volatility of these resources have brought about significant stochastic transitions from one operating point to another \cite{2010-Conejo-decision-making}, thereby making the system more prone to instability.
%
%

Owing to the complexity and high dimensionality of power systems, several CIGRE and IEEE Task Forces have classified power system stability into appropriate categories with the aim of facilitating the assessment of the problem  \cite{2004-stability-classification}. 
In each category, a set of simplifying assumptions are made and an appropriate system model with a reasonable level of details is adopted. One of the most fundamental models used in several categories of stability (especially rotor angle stability) is the swing equation model. This model describes the nonlinear relation between the power output and voltage angles of synchronous generators and can be used to analyze the short term dynamical behaviour of the system. 

The application of swing equations is not restricted to the characterization of interconnected synchronous machines. They can also be used to model the behavior of inverter-based resources, which can be controlled to emulate the behavior of synchronous machines \cite{2016-Xie-multi-microgrid}.
Despite such a wide range of applications, 
some basic questions on the equilibrium points (EPs) of swing equations are not fully understood. In particular,
\begin{enumerate}[(i)]
    \item Under what conditions an EP of swing equations with nontrivial transfer conductance is asymptotically stable? \label{Q1}
    \item  What is the relation between the network structure of a power system and the stability of the EPs of swing equations? \label{Q2}
\end{enumerate}
Such challenging questions have perplexed many researchers over the years, and some parts of the puzzle have been solved. For instance, the EPs of swing equations with zero transfer conductance (the so-called \emph{ lossless} model) have been studied in the 1980s (see e.g. Chiang \textit{et al.} \cite{1988-Chiang-stability-regions} and Zaborszky \textit{et al.} \cite{ 1988-Zaborszky-phase-portrait}.) They assume that there is a unique stable EP and a finite number of unstable EPs in any $2\pi$ interval of generator angle coordinate. It is shown that the stability boundary of a stable EP consists of the stable manifolds of all the EPs (and/or closed orbits) on the stability boundary. Moreover, various methods in the broad category of the so-called direct methods have been developed to estimate the region of attraction of EPs \cite{2011-Chiang-book-direct-methods,1985-Varaiya-direct-methods}. These methods not only avoid expensive time-domain integration of swing equations, but also provide a quantitative measure of the degree of stability. Unfortunately, the existing methods are mostly limited to lossless systems and require a significant computational effort.
 More recently, the authors in \cite{2016-Turitsyn-Lyapunov-Functions} have alleviated some of these drawbacks.

The characteristics of swing equations with nontrivial transfer conductance (the so-called \emph{lossy} model) are more challenging to analyze. This is partly due to the fact that there is no global energy function for such systems \cite{1989-Chiang-energy-functions-lossy}, and therefore, some main approaches (e.g., the energy function method) to investigate these equations cannot be directly applied. Nonetheless, several approaches are devised over the years. For instance, reference \cite{1979-Athay-practical-method} computes numerical energy functions to deal with the effects of transfer conductances on the system behavior.
In \cite{2005-ortega-nontrivial-transfer-conductances2}, the authors extend the lossy swing equation model by considering the dynamics of the excitation system, and ensure the asymptotic stability of the operating points by designing a nonlinear feedback control for the generator excitation field. In \cite{1980-Skar-stability-thesis}, the local stability of swing equations with nontrivial transfer conductance is examined by linearization and conditions for stability of EPs are established. It is found that undamped swing equations can be stable only under very special circumstances.  
Another set of literature that address similar questions are the recent studies of the synchronization of Kuramoto oscillators that
are applicable to the stability analysis of lossy swing equations with strongly overdamped generators \cite{2012-Dorfler-Kuramoto}. Furthermore, exploring question (\ref{Q2}), the recent work \cite{2019-Milano-Topology-Impact}
statistically studies the impact of topology of the network on transient stability.
%
%

In this paper, we aim to address questions (\ref{Q1}) and (\ref{Q2}), and provide a rigorous analysis of the stability of EPs in lossy swing equation models. There are two main contributions in the present paper. 
\begin{itemize}
    \item We characterize the relationship between the Jacobian of swing equations and the underlying graph of power grids. Specifically, we associate a weighted graph with the swing equation model and then mathematically describe the relationship between the spectrum of the graph Laplacian and the spectrum of the swing equation Jacobian.
    
    \item
     We develop a sufficient condition under which the EPs of lossy swing equations are stable.
    In addition to providing new insights into the theory of stability, the derived conditions are easy to check, use only local information, and are suitable for real-time monitoring and fast stability assessment.
    The proposed stability certificate can be interpreted as enforcing an upper bound on the matrix norm of the Laplacian of the underlying graph of the system. We show that the aforementioned upper bound is proportional to the square of damping and inverse of inertia at each node of the power grid. These results provide new insights into the way the damping and inertia at each node of the system would affect the stability of EPs. We also illustrate how the proposed condition provides a quantitative measure of the degree of stability in power systems.
\end{itemize}

The rest of our paper is organized as follows. Section \ref{sec: Background} provides a brief background on dynamical systems and swing equations. In Section \ref{Sec: Linearization and Spectrum of Jacobian}, the swing equation model is linearized and the linkage between the Jacobian of swing equations and the underlying graph of the power grid is established. Section \ref{Sec: Stability and Hyperbolicity of the Equilibrium Points} is devoted to the main results on the stability of the swing equation EPs. Section \ref{Sec: Computational Experiments} further illustrates the developed analytical results through numerical examples, and finally, the paper concludes with Section \ref{Sec: Conclusions}.
\section{Background} \label{sec: Background}
\subsection{Notations}
We use $\mathbb{C_{-}}$ to denote the set of complex numbers with negative real part, and $\mathbb{C_{0}}$ to denote the set of complex numbers with zero real part. $j=\sqrt{-1}$ is the imaginary unit, which should not be confused with the subscript $j$ that is used as an index. The spectrum of a matrix $A\in\mathbb{R}^{n\times n}$ is denoted by $\sigma(A)$.
\subsection{Autonomous Ordinary Differential Equations}
Suppose $f:\mathbb{R}^n\to \mathbb{R}^n$ is a smooth vector field, where the term smooth here means continuously differentiable. An autonomous ordinary differential equation (ODE) is an equation of the form
\begin{align} \label{eq: nonlinear ode}
    \dot{x}=f(x),
\end{align}
where the dot denotes differentiation with respect to the independent variable $t$ (here a measure of time), and the dependent variable $x$ is a vector of state variables. If $f(x_0)=0$ for some $x_0\in\mathbb{R}^n$, then $x_0$ is called an EP. 
%
Let us define the function $\phi:\mathbb{R} \times \mathbb{R}^n \to \mathbb{R}^n$ as follows: For any $x\in \mathbb{R}^n$, let $t\mapsto\phi(t,x)$ be the solution of the ODE \eqref{eq: nonlinear ode}, that is, $\frac{d\phi}{dt}(t,x)=f(\phi(t,x)), \forall t \in \mathbb{R}$. Moreover, $\phi(0,x)=x$.  
    Now, an EP $x_0$ of the ODE \eqref{eq: nonlinear ode} is 
    \begin{itemize}
        \item stable (in the sense of Lyapunov) if for each $\epsilon>0$, there exits a number $\xi >0$ such that $||\phi(t,x)-x_0||<\epsilon, \forall t\ge 0$ whenever $||x-x_0||<\xi$;
        \item unstable if it not stable;
        \item asymptotically stable if it is stable and $\xi$ can be chosen such that $\lim_{t\to\infty} ||\phi(t,x)-x_0||=0$ whenever $||x-x_0||<\xi$.
    \end{itemize}
Recall that if $x_0$ is an EP for the ODE \eqref{eq: nonlinear ode} and if all eigenvalues of the linear transformation $\nabla f(x_0)$ have negative real parts, then $x_0$ is asymptotically stable.    
\subsection{Multi-Machine Swing Equations} \label{subsec: Multi-Machine Swing Equations}
Consider a power system with the set of generators $\mathcal{N}=\{1,\cdots,n\}, n\in\mathbb{N}$. Based on the classical small-signal stability assumptions \cite{2018-Sauer-Dynamics}, the mathematical model for a power system is described by the following system of nonlinear autonomous ODEs, aka swing equations:  
\begin{subequations} \label{eq: swing equations}
	\begin{align}
	& \dot{\delta}_i(t) = \omega_i(t), && \forall i \in \mathcal{N},  \label{eq: swing equations a}\\
	&  \frac{M_i}{\omega_s} \dot{\omega}_i(t)+ \frac{D_i}{\omega_s} \omega_i(t) = P_{m_i} - P_{e_i}(\delta(t)), && \forall i \in \mathcal{N}, \label{eq: swing equations b}
	\end{align}
\end{subequations}
where for each generator $i\in\mathcal{N}$, $P_{m_i}$ and $P_{e_i}$ are respectively the mechanical and electrical power in per unit, $M_i$ is the inertia constant in seconds, $D_i$ is the unitless damping coefficient, $\omega_{s}$ is the synchronous angular velocity in electrical radians per seconds, $t$ is the time in seconds, $\delta_i(t)$ is the rotor electrical angle in radians, and finally $\omega_i(t)$ is the deviation of the rotor angular velocity from the synchronous velocity in electrical radians per seconds. Henceforth we do not explicitly write the dependence of the state variables $\delta$ and $\omega$ on time $t$. 
The electrical power $P_{e_i}$ in \eqref{eq: swing equations b} is given by: 
\begin{align} \label{eq: flow function}
	P_{e_i}(\delta) & = \sum \limits_{j = 1}^n { V_i  V_j Y_{ij} \cos \left( \theta _{ij} - \delta _i + \delta _j \right)},
\end{align}
where $V_i$ is the terminal voltage magnitude of generator $i$, and $Y_{ij}\measuredangle \theta_{ij}$ is the $(i,j)$ entry of the reduced admittance matrix.
\begin{definition}[flow function] \label{def: flow function}
The smooth function $P_e:\mathbb{R}^n \to \mathbb{R}^n$ given by $\delta \mapsto P_e(\delta)$ in \eqref{eq: flow function} is called the flow function.
\end{definition}
Since the flow function is smooth, there exists a unique solution to the swing equation \eqref{eq: swing equations}. 
The flow function is invariant to the translation of $\delta \mapsto \delta + \alpha \mathbf{1}$, where $\alpha \in \mathbb{R}$ and $\mathbf{1}\in\mathbb{R}^n$ is the vector of all ones, i.e., $P_e(\delta + \alpha \mathbf{1})=P_e(\delta)$. A common way to deal with this situation is to define a reference bus and refer all other bus angles to it. This is equivalent to projecting the original state space onto a lower dimensional space. 
\section{Linearization and Spectrum of Jacobian} \label{Sec: Linearization and Spectrum of Jacobian}
\subsection{Linearization}
The Jacobian of the vector field in \eqref{eq: swing equations} is given by
\begin{align}\label{eq: J}
J := \begin{bmatrix}
0 & I \\
-M^{-1} L  & - M^{-1}D \\
\end{bmatrix} \in\mathbb{R}^{2n\times 2n},
\end{align}
where $I\in\mathbb{R}^{n\times n}$ is the identity matrix, $M= \frac{1}{\omega_s}  \mathbf{diag}(M_1,\cdots,M_n)$, and $D=\frac{1}{\omega_s}\mathbf{diag}(D_1,\cdots,D_n)$. Moreover, $L\in\mathbb{R}^{n\times n}$ is the Jacobian of the flow function with the diagonal entries:
\begin{align*} 
\frac {\partial P_{e_i}} {\partial \delta_i} = \sum \limits_{j \ne i} { V_i V_j Y_{ij} \sin \left( {\theta _{ij} - {\delta _i} + {\delta _j}} \right) }, \forall i \in \mathcal{N},
\end{align*}
and off-diagonal entries 
\begin{align*}
\frac{\partial P_{e_i} } {\partial \delta _j} =  - {V_i} {V_j}{Y_{ij}}\sin \left( {{\theta _{ij}} - {\delta _i} + {\delta _j}} \right),\forall i,j \in \mathcal{N}, j \neq i.
\end{align*}
In the following subsection, we study the role of matrix $L$ in the spectrum of the Jacobian matrix $J$ .
\subsection{Spectral Relationship Between Matrices $J$ and $L$}
We establish the spectral relationship between $J$ and $L$ via a singularity constraint. Let us first define the concept of a quadratic matrix pencil \cite{2001-Tisseur-Pencil}. Consider $n\times n$ real matrices $Q_0,Q_1,$ and $Q_2$. A quadratic matrix pencil is a matrix-valued function $P:\mathbb{C}\to\mathbb{R}^{n \times n}$ given by $\lambda \mapsto P(\lambda)$ such that  $P(\lambda) = \lambda^2Q_2 + \lambda Q_1 + Q_0$.
\begin{lemma} \label{lemma: relation between ev J and ev J11}
	$\lambda$ is an eigenvalue of $J$ if and only if the quadratic matrix pencil $P(\lambda):= \lambda^2 M + \lambda D + L$ is singular.
\end{lemma}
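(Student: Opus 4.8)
The plan is to work directly from the definition of an eigenvalue, matching eigenvectors of $J$ with kernel vectors of the pencil, and to treat the determinant identity only as a secondary sanity check. First I would note that since $M=\tfrac{1}{\omega_s}\mathbf{diag}(M_1,\dots,M_n)$ has strictly positive diagonal entries, $M^{-1}$ exists and $J$ in \eqref{eq: J} is well defined. Then $\lambda\in\sigma(J)$ exactly when there is a nonzero vector $\begin{bmatrix}u\\ v\end{bmatrix}\in\mathbb{C}^{2n}$, with $u,v\in\mathbb{C}^n$, satisfying $J\begin{bmatrix}u\\ v\end{bmatrix}=\lambda\begin{bmatrix}u\\ v\end{bmatrix}$. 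Reading off the two block rows of \eqref{eq: J}, this is equivalent to the pair of equations $v=\lambda u$ and $-M^{-1}Lu-M^{-1}Dv=\lambda v$.

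Next I would eliminate $v$: substituting $v=\lambda u$ into the second equation and multiplying through by the invertible matrix $M$ gives $-Lu-\lambda Du=\lambda^2 Mu$, that is, $(\lambda^2 M+\lambda D+L)u=P(\lambda)u=0$. Every manipulation used here (block substitution, multiplication by $M$) is reversible, so the eigenvalue equation for $J$ is equivalent to the existence of $u,v\in\mathbb{C}^n$ with $(u,v)\neq 0$, $v=\lambda u$, and $P(\lambda)u=0$.

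The only point requiring a genuine (if brief) argument is that this last statement is the same as ``$P(\lambda)$ is singular.'' In the forward direction, if $(u,v)\neq 0$ and $v=\lambda u$, then necessarily $u\neq 0$ — otherwise $v=\lambda u=0$ as well — so $P(\lambda)u=0$ exhibits a nontrivial kernel vector and $P(\lambda)$ is singular. Conversely, if $P(\lambda)$ is singular, choose any $u\neq 0$ in its kernel, set $v:=\lambda u$; then $(u,v)\neq 0$, and running the equivalences backward shows $\begin{bmatrix}u\\ v\end{bmatrix}$ is an eigenvector of $J$ associated with $\lambda$. This closes the proof.

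As a cross-check one can also proceed by determinants: applying the Schur-complement identity to $\lambda I_{2n}-J=\begin{bmatrix}\lambda I & -I\\ M^{-1}L & \lambda I+M^{-1}D\end{bmatrix}$ yields $\det(\lambda I_{2n}-J)=\det(M^{-1})\det(P(\lambda))$, whence the equivalence follows because $\det(M^{-1})\neq 0$. I do not expect a real obstacle here; the one thing to be careful about in the determinant route is the case $\lambda=0$, where the $(1,1)$ block $\lambda I$ is singular, so the Schur complement must be taken with respect to a different block (or handled separately) — which is precisely the annoyance the eigenvector argument avoids altogether.
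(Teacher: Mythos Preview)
Your proof is correct and follows essentially the same route as the paper's: both argue directly from the eigenvector equation, eliminate the second block component via $v=\lambda u$, multiply through by $M$ to obtain $P(\lambda)u=0$, and handle the ``$u\neq 0$'' point and the converse construction exactly as you do. The Schur-complement determinant remark is a nice additional sanity check that the paper does not include, but it is not needed for the argument.
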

\begin{proof}
	Let $\lambda$ be an eigenvalue of $J$ and $[v_1 , v_2]$ be a corresponding eigenvector. Then	
	\begin{align} \label{eq: J cha eq}
	\begin{bmatrix}
	0 & I \\
	-M^{-1} L     &     - M^{-1}D \\
	\end{bmatrix}   \begin{bmatrix} v_1 \\v_2  \end{bmatrix}  = \lambda   \begin{bmatrix} v_1 \\v_2  \end{bmatrix}.
	\end{align}
	Thus, $ v_2 = \lambda v_1$ and $ M^{-1} L + \lambda ( M^{-1} D  + \lambda I  ) )   v_1 = 0$, which implies
    \begin{align} 
\left( L + \lambda D +  \lambda^2 M \right) v_1 = 0. \label{eq: quadratic matrix pencil}
	\end{align}  
	Since the eigenvector $v$ is nonzero, we have $v_1 \not = 0$ (otherwise $v_2 = \lambda \times 0 = 0 \implies v = 0 $). Equation \eqref{eq: quadratic matrix pencil} implies that the matrix pencil $P(\lambda)= \lambda^2 M + \lambda D + L$ is singular.
	Conversely, suppose there exists $\lambda \in \mathbb{C}$ such that  $P(\lambda)= \lambda^2 M + \lambda D + L$ is singular. Choose a nonzero $v_1 \in   \mathbf{ker}(P(\lambda))$ and let $ v_2 := \lambda v_1$. 
	Accordingly, the characteristic equation \eqref{eq: J cha eq} holds, and consequently, $\lambda$ is an eigenvalue of $J$.
\end{proof}

Lemma \ref{lemma: relation between ev J and ev J11} illustrates the role of matrix $L$ in the stability of the EPs. {We note that \cite[Proposition 5.14]{2018-Dorfler-algebraic-graph-theory} presents similar results under symmetry assumptions, while  matrices $L$, $M$, and $D$ here in Lemma \ref{lemma: relation between ev J and ev J11} are not necessarily symmetric.
}
Next, we look more closely at the spectrum of $L$.

\subsection{Graph Induced by $L$ and Its Spectral Properties} \label{subsec: Spectral Properties of the Jacobian Matrix L}
The Jacobian $L$ of the flow function encodes the graph structure of the power network. To see this, we can define a weighted directed graph $\mathcal{G}=(\mathcal{N},\mathcal{A},\mathcal{W})$ where each node $i\in\mathcal{N}$ corresponds to a generator and each directed arc $(i,j)\in\mathcal{A}$ corresponds to the entry $(i,j), i\ne j$ of the Jacobian matrix $L$. We further define a weight for each arc $(i,j)\in\mathcal{A}$:
\begin{align} \label{eq: weights of digraph lossy}
w_{ij} = {V_i} {V_j}{Y_{ij}}\sin \left( \varphi_{ij} \right), \quad\forall (i,j)\in\mathcal{A},
\end{align}
where $\varphi_{ij} := {{\theta _{ij}} - {\delta _i} + {\delta _j}}$. With the above definitions, we can see that the Jacobian matrix $L$ of the flow function in \eqref{eq: J} is indeed the Laplacian of the directed graph $\mathcal{G}$ defined as $L = D^+(\mathcal{G}) - A(\mathcal{G})$,
where $D^+(\mathcal{G})$ is a diagonal matrix with the $i$-th diagonal entry being the sum of all the weights of the out-going arcs incident to node $i$, and $A(\mathcal{G})$ is the {adjacency} matrix of $\mathcal{G}$. 
%

In general, the arc weights $w_{ij}$ can be positive or negative, and matrix $L$ is not necessarily symmetric. In practice, however, $w_{ij}$ varies in a small positive range. Fig. \ref{fig: histogram of angles} illustrates the histogram of the angle $\varphi_{ij}$ for all $(i,j)$ in different reduced IEEE standard test cases, where the load flow solution is provided by \textsc{Matpower} \cite{matpower}. Accordingly, $\varphi_{ij} \in (0,\pi)$ in all of these cases. 
We make the following reasonable assumption that the EPs of swing equations \eqref{eq: swing equations} are located in the set $\Omega$ defined as
\begin{align*}
\Omega = \left\{ [\delta,\omega]\in\mathbb{R}^{2n} :   0 < \varphi_{ij} < \pi , \forall (i,j) \in \mathcal{A},\omega = 0  \right\}.
\end{align*}

\begin{figure}[t]
 \includegraphics*[width=3.5in, keepaspectratio=true]{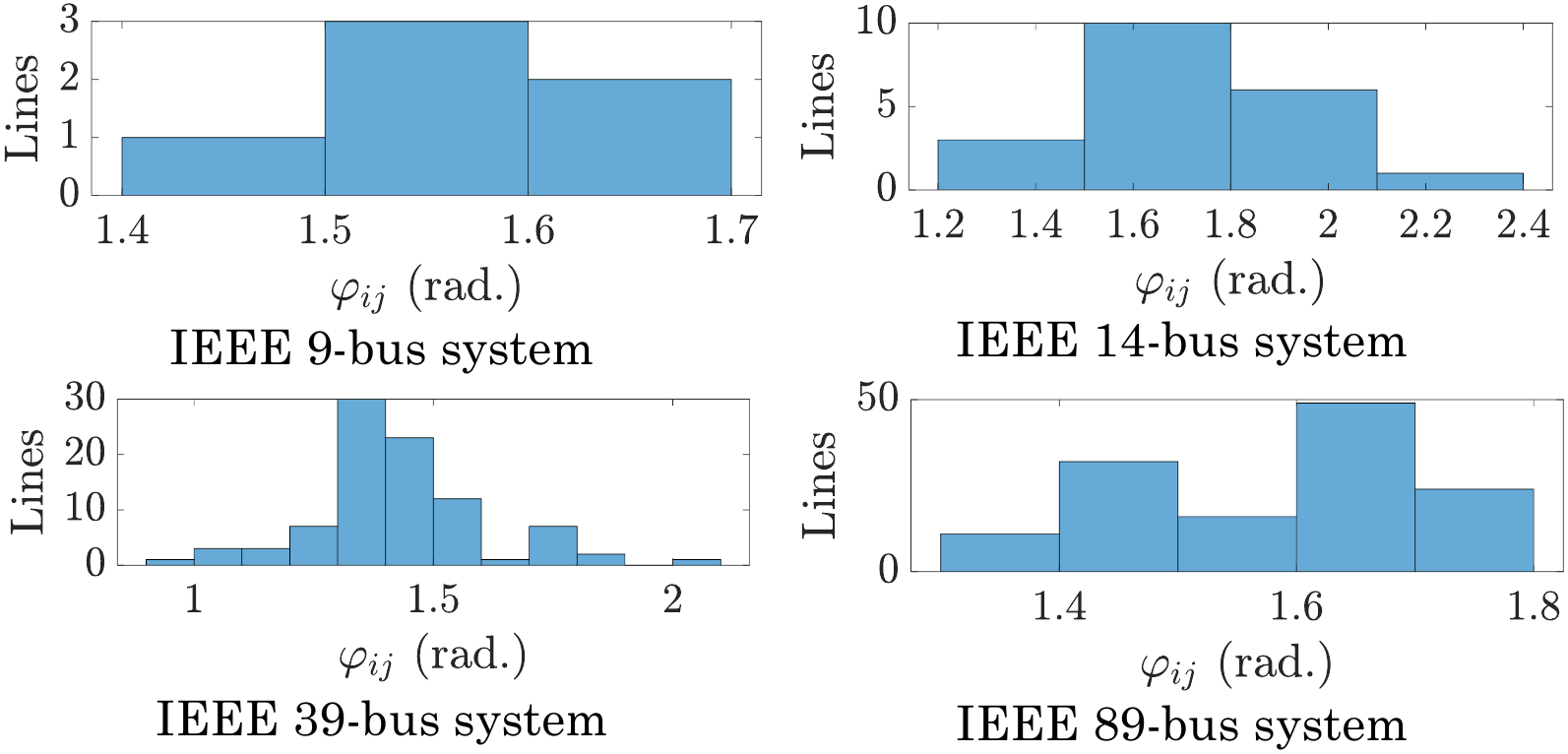}
 \centering
  \caption{Histogram of the distribution of $\varphi_{ij}$ for all $(i,j)$ in different reduced IEEE standard test cases.}
  \label{fig: histogram of angles}
\end{figure}
\begin{proposition}\label{prop: quasi-M}
Let $[\delta^{*},\omega^{*}]\in \Omega$ be an EP of swing equations \eqref{eq: swing equations}. The Jacobian matrix $L$ at this point is a singular M-matrix. In particular, $L$ has the following properties:
\begin{enumerate}[(i)]
	\item  $L =  \lambda_0 I - B$ for some nonnegative matrix $B$ (i.e., $b_{ij} \ge 0, \forall i, j$) and some $\lambda_0 \ge \rho$, where $\rho$ is a maximal (non-negative) eigenvalue of $B$.
	
	\item All principal minors of $L$ are non-negative.
	
	\item $L$ has at least one zero eigenvalue, $\mathbf{1}$ is an eigenvector, and the real part of each non-zero eigenvalue of $L$ is positive.
\end{enumerate}
\end{proposition}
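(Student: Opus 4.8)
The plan is to verify that $L$ fits the classical definition of a (possibly singular) M-matrix and then read off the three listed properties from standard M-matrix theory. First I would establish the sign pattern: by the assumption that $[\delta^*,\omega^*]\in\Omega$, we have $0<\varphi_{ij}<\pi$ for every arc $(i,j)$, hence $\sin(\varphi_{ij})>0$, and since voltage magnitudes $V_i,V_j$ and admittance magnitudes $Y_{ij}$ are nonnegative, each arc weight $w_{ij}=V_iV_jY_{ij}\sin(\varphi_{ij})$ is nonnegative. From the Laplacian form $L=D^+(\mathcal{G})-A(\mathcal{G})$ this gives $L_{ij}=-w_{ij}\le 0$ for $i\ne j$, so $L$ has nonpositive off-diagonal entries (it is a $Z$-matrix). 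Moreover, each diagonal entry $L_{ii}=\sum_{j\ne i}w_{ij}$ equals the sum of the absolute values of the off-diagonal entries in row $i$, so $L$ is weakly diagonally dominant with $L\mathbf{1}=0$.

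Next I would derive property (i). Choose $\lambda_0:=\max_i L_{ii}\ge 0$ and set $B:=\lambda_0 I - L$. By the sign pattern just established, $B$ is entrywise nonnegative: its off-diagonal entries are $w_{ij}\ge 0$ and its diagonal entries are $\lambda_0-L_{ii}\ge 0$. It remains to argue $\lambda_0\ge\rho(B)$, the Perron root of the nonnegative matrix $B$. This follows because $L=\lambda_0 I - B$ is weakly diagonally dominant with nonnegative diagonal, so by Gershgorin every eigenvalue of $L$ has nonnegative real part; equivalently every eigenvalue $\mu$ of $B$ satisfies $\mathrm{Re}(\lambda_0-\mu)\ge 0$, i.e. $\mathrm{Re}(\mu)\le\lambda_0$, and in particular the real eigenvalue $\rho(B)$ satisfies $\rho(B)\le\lambda_0$. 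This exhibits $L$ as a singular M-matrix in the standard representation.

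Property (ii) then follows from the characterization of M-matrices: for a $Z$-matrix, being an M-matrix (in the sense of (i)) is equivalent to having all principal minors nonnegative — this is one of the classical equivalent conditions (see Berman–Plemmons). Alternatively one can argue directly: every principal submatrix of $L$ is again a $Z$-matrix that is weakly diagonally dominant, hence itself an M-matrix, hence has nonnegative determinant. For property (iii): $L\mathbf{1}=0$ shows $\mathbf{1}$ is an eigenvector with eigenvalue $0$, so $L$ is singular; and by Gershgorin's theorem applied to $L$, every eigenvalue lies in the union of disks centered at $L_{ii}\ge 0$ with radius $\sum_{j\ne i}|L_{ij}|=L_{ii}$, hence has nonnegative real part, and any eigenvalue on the imaginary axis must in fact be $0$ — the only point of each such closed disk with nonpositive real part is the origin. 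Therefore every eigenvalue with nonzero modulus has strictly positive real part, which is the stated claim.

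The main obstacle I anticipate is the inequality $\lambda_0\ge\rho(B)$ in part (i): one must be careful that $B$ need not be irreducible (the graph may be disconnected, and with possibly zero weights some arcs effectively vanish), so Perron–Frobenius does not immediately hand over a strict comparison, and the clean way through is precisely the Gershgorin argument on $L$ above, which handles the reducible case uniformly. A secondary subtlety worth a sentence in the write-up is that the diagonal dominance is only weak, so one cannot conclude invertibility of $L$ (indeed $L$ is singular) — all conclusions must be stated for the singular M-matrix class, which is exactly what the proposition asks for.
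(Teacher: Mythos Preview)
Your proposal is correct and follows essentially the same route as the paper: establish the $Z$-matrix sign pattern and zero row sums from the assumption $[\delta^*,\omega^*]\in\Omega$, invoke Gershgorin to locate the spectrum in the closed right half plane, and appeal to standard M-matrix equivalences for (i) and (ii). Your write-up is in fact more explicit than the paper's in two places---you construct $\lambda_0=\max_i L_{ii}$ and $B$ concretely rather than citing the equivalence, and you spell out why a purely imaginary nonzero point cannot lie in any Gershgorin disk (the paper simply asserts the disks are ``in the right half plane'' and declares (iii) proved)---but the underlying argument is the same.
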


\begin{proof}
When $[\delta^{*},\omega^{*}]\in \Omega$, we have 	$\frac {\partial P_{e_i}} {\partial \delta_i} \ge 0 , \: \forall i\in\mathcal{N}$ and $\frac {\partial P_{e_i}} {\partial \delta_j} \le 0, \: \forall (i,j)\in\mathcal{A}, i \ne j$. Since $L$ has zero row sum, we have $L\mathbf{1}=0\implies0\in\sigma(L)$. Furthermore, the sum of the absolute values of the nondiagonal entries in the $i$-th row of $L$ is equal to $[L]_{ii}$, that is
\begin{align}
    [L]_{ii} = \sum_{j\ne i} |[L]_{ij}| &&\forall i \in \mathcal{N}.
\end{align}
Let $\mathbb{D}([L]_{ii})$ be a closed disc centered at $[L]_{ii}$ with radius $[L]_{ii}$. According to the Gershgorin circle theorem, every eigenvalue of $L$ lies within at least one of the Gershgorin discs $\mathbb{D}([L]_{ii})$, which are located on the right half plane. This shows (iii). The equivalence of (iii) with (i) and (ii) is a fundamental property of M-matrices \cite{1974-M-matrices}.
\end{proof}

We will use property (iii) of matrix $L$ shown in the above proposition later to prove our main result in the next section. 

\section{Sufficient Condition for the Stability of Swing Equations: A Fast Certificate} \label{Sec: Stability and Hyperbolicity of the Equilibrium Points}
In this section, we present our main result on the stability of the swing equation EPs. 
\begin{theorem} \label{thm: stability properties}
    Let $[\delta^{*},\omega^*]\in \Omega$ be an EP of swing equations \eqref{eq: swing equations}. Suppose all generators have positive damping coefficient and inertia, and the underlying {undirected graph of the power grid is connected}. If condition
        \begin{align} \label{eq: condition for stability}
            \sum \limits_{j \ne i} { V_i V_j Y_{ij} \sin \left( {\theta _{ij} - {\delta _i^*} + {\delta _j^*}} \right) } \le \frac{D_i^2}{2M_i},     \forall i \in \mathcal{N} \tag{\textbf{C}}
        \end{align}
        holds, then the EP is asymptotically stable.
\end{theorem}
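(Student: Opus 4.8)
The strategy is to localize the spectrum of the Jacobian $J$ by combining Lemma~\ref{lemma: relation between ev J and ev J11} with a Gershgorin-type estimate for the quadratic matrix pencil $P(\lambda)=\lambda^2M+\lambda D+L$. By Lemma~\ref{lemma: relation between ev J and ev J11}, it suffices to show that every $\lambda\in\mathbb C$ for which $P(\lambda)$ is singular satisfies $\operatorname{Re}\lambda\le 0$, that the only such $\lambda$ on the imaginary axis is $\lambda=0$, and that $\lambda=0$ is a simple eigenvalue of $J$ coming solely from the translation invariance $\delta\mapsto\delta+\alpha\mathbf 1$ noted in Section~\ref{sec: Background}. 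Passing to the reference-bus-reduced coordinates discussed there removes precisely this zero mode and leaves a Jacobian all of whose eigenvalues have negative real part, so the reduced EP is asymptotically stable by the fact recalled at the end of Section~\ref{sec: Background}.

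For the localization, suppose $P(\lambda)v=0$ with $v\neq 0$. Since $M$ and $D$ are diagonal, the $i$-th scalar equation is $\big(\tfrac{M_i}{\omega_s}\lambda^2+\tfrac{D_i}{\omega_s}\lambda+[L]_{ii}\big)v_i=-\sum_{j\neq i}[L]_{ij}v_j$. Picking $i$ with $|v_i|=\max_k|v_k|$ and using the identity $[L]_{ii}=\sum_{j\neq i}|[L]_{ij}|$, which holds on $\Omega$ by the proof of Proposition~\ref{prop: quasi-M}, the triangle inequality gives
\begin{align*}
\Big|\tfrac{M_i}{\omega_s}\lambda^2+\tfrac{D_i}{\omega_s}\lambda+[L]_{ii}\Big|\;\le\;\sum_{j\neq i}|[L]_{ij}|\,\frac{|v_j|}{|v_i|}\;\le\;[L]_{ii}.
\end{align*}
Thus $\lambda$ lies, for at least one $i\in\mathcal N$, in the ``Gershgorin region'' $R_i:=\{z\in\mathbb C:|a_iz^2+b_iz+c_i|\le c_i\}$, where $a_i=M_i/\omega_s>0$, $b_i=D_i/\omega_s>0$, and $c_i=[L]_{ii}\ge 0$. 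The key elementary fact is that $c_i\le b_i^2/(2a_i)$ --- which is condition~\eqref{eq: condition for stability} at node $i$, up to the $\omega_s$ scaling in $M$ and $D$ --- forces $R_i\cap\{\operatorname{Re}z\ge 0\}=\{0\}$: writing $z=x+jy$ and expanding $|a_iz^2+b_iz+c_i|^2\le c_i^2$, one rearranges the inequality into $0\le S(x,y)\le y^2(2a_ic_i-b_i^2)\le 0$, where $S(x,y)$ is a sum of terms each manifestly nonnegative when $x\ge 0$; hence $x=y=0$. Geometrically the boundary of $R_i$ is tangent to the imaginary axis at the origin exactly at the threshold $c_i=b_i^2/(2a_i)$, which is why the factor $2$ in \eqref{eq: condition for stability} is sharp for this estimate. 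Therefore, under \eqref{eq: condition for stability}, $\sigma(J)\subseteq\{\operatorname{Re}z\le 0\}$ and $\sigma(J)$ meets the imaginary axis at most at $\lambda=0$.

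It remains to analyze $\lambda=0$. Connectivity of the underlying undirected grid makes the weighted digraph $\mathcal G$ strongly connected --- on $\Omega$ we have $\sin\varphi_{ij}>0$, so $(i,j)\in\mathcal A$ iff $Y_{ij}\neq 0$, which is a symmetric relation --- hence the singular M-matrix $L$ of Proposition~\ref{prop: quasi-M} is irreducible, and by Perron--Frobenius $0$ is a simple eigenvalue of $L$ with right null vector $\mathbf 1$ and a strictly positive left null vector $u$. Differentiating $\det P$ at $\lambda=0$ and using that its first-order term is proportional to $u^\top P'(0)\mathbf 1=u^\top D\mathbf 1>0$ shows $\det P$ has a simple zero there; therefore $0$ is a simple, semisimple eigenvalue of $J$ with eigenvector $[\mathbf 1,\mathbf 0]^\top$, i.e.\ precisely the translation direction. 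Factoring it out as above completes the proof; equality in \eqref{eq: condition for stability} at some node is harmless, since $R_i$ then still meets the imaginary axis only at the origin, which is already accounted for.

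The step I expect to be the main obstacle is the lack of symmetry: $L$, $M$, and $D$ are not simultaneously diagonalizable (indeed $L$ need not be symmetric), so the linearization cannot be decoupled into scalar damped oscillators and one must argue directly with the pencil $P(\lambda)$; this is also why the result is only a sufficient condition and why a Gershgorin localization, rather than an exact eigenvalue computation, is the natural tool. A secondary difficulty is the careful bookkeeping of constants --- the $\omega_s$ scaling and the sharp factor $\tfrac12$ --- so that the per-node inequality emerging from the Gershgorin estimate matches \eqref{eq: condition for stability}, together with the correct handling of the boundary case of equality in \eqref{eq: condition for stability} and of the ever-present zero eigenvalue of $J$.
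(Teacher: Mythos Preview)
Your proposal is correct and follows essentially the same route as the paper: localize $\sigma(J)$ via Lemma~\ref{lemma: relation between ev J and ev J11}, pick the maximum-modulus component of a null vector of the pencil to obtain $|a_i\lambda^2+b_i\lambda+c_i|\le c_i$, and expand the squared modulus to show that \eqref{eq: condition for stability} forces every nonzero eigenvalue into the open left half-plane. The paper organizes the same computation into separate steps for real and non-real $\lambda$ and treats the simplicity of $\lambda=0$ via a graph-theoretic lemma on strongly connected Laplacians rather than your derivative-of-$\det P$ argument, but the core idea and the key inequality are identical.
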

The proof of Theorem \ref{thm: stability properties} is given in Appendix \ref{proof of thm: stability properties}.
\begin{remark} \label{remark: interpretation of condition C}
    Condition \eqref{eq: condition for stability} provides a practical and efficient way to certify the small-signal stability of the EPs. 
    %
    {
    The left-hand side of condition \eqref{eq: condition for stability} is closely related to the reactive power output of a generator. Note that at an EP the reactive power injected from bus $i$ into the network is $Q_i = - \sum_{j=1}^n { V_i V_j Y_{ij} \sin ( {\theta _{ij} - {\delta_i^*} + {\delta_j^*}} ) }$. Intuitively, when a generator is supplying more reactive power, the left-hand side of condition \eqref{eq: condition for stability} decreases, and this helps make condition \eqref{eq: condition for stability} satisfied.
    }
\end{remark}

{It is worth mentioning that in \cite{2008-Ishizaki-graph-theory-power-system}, small-signal stability of lossless swing equations is studied. It is shown that if $[\delta^{*},\omega^*]\in \Omega$ is an EP, then the EP is locally asymptotically stable. 
Theorem \ref{thm: stability properties} is a generalization of such results to lossy swing equations. Contrary to the lossless case, we will show in the next section that an EP in lossy networks could be unstable even if it belongs to the set $\Omega$.
}

\section{Simulation Results} \label{Sec: Computational Experiments}
In this section, we test the practicality of the assumptions on which Theorem \ref{thm: stability properties} is based. We also show how conservative condition \eqref{eq: condition for stability} is, and how it can be used not only as a fast stability certificate, but also as a quantitative measure of the degree of stability.

Table \ref{tab: diffeent ieee cases} provides the details of testing Theorem \ref{thm: stability properties} and condition \eqref{eq: condition for stability} on different IEEE standard test systems \cite{matpower}. All these systems have a connected underlying graph and nonzero transfer conductances. The second column of Table \ref{tab: diffeent ieee cases} shows the domain of $\varphi_{ij}$ in these test cases. Recall that $\varphi_{ij} = {{\theta _{ij}} - {\delta _i} + {\delta _j}}$ is the argument of the $\sin$ function, and having $\varphi_{ij}\in(0,\pi)$ ensures that an EP $[\delta^{*},\omega^*]$ belongs to the set $\Omega$. As can be seen, this property holds in all test cases of Table \ref{tab: diffeent ieee cases}, and therefore, the assumptions of Theorem \ref{thm: stability properties} hold in a wide variety of practical power systems.

\begin{table}[]
\caption{Illustration of the proposed stability certificate in Theorem \ref{thm: stability properties}.}
\label{tab: diffeent ieee cases}
\begin{tabular}{|c|c|c|c|}
\hline
\textbf{Test  case}   & \textbf{Dom($\varphi_{ij}/\pi$)} & \textbf{Dom($\mathcal{S}_i$)} & \textbf{$|\Re{(\lambda_2)}|$} \\ \hline
IEEE $9$-bus            & $[0.48 , 0.52 ] $          & ${[}-0.79,-0.22{]}    $       & $3.18 $                         \\ \hline
IEEE $14$-bus           & $[0.43 , 0.66 ] $          & ${[}-5.08,-0.03{]}    $       & $2.17 $                         \\ \hline
IEEE $30$-bus           & $[0.36 , 0.66 ] $          & ${[}-12.26,-0.51{]}   $       & $0.75 $                         \\ \hline
IEEE $39$-bus           & $[0.37 , 0.62 ] $          & ${[}-7.73,-0.12{]}    $       & $4.95 $                         \\ \hline
IEEE $89$-bus           & $[0.45 , 0.59 ] $          & ${[}-143.75,1166.9{]} $       & $4.15 $                         \\ \hline
IEEE $89$-bus mod.      & $[0.25 , 0.97 ] $          & ${[}-280.19,-0.49{]}  $       & $4.14 $                         \\ \hline
IEEE $118$-bus          & $[0.42 , 0.63 ] $          & ${[}-241.73,-0.21{]}  $       & $0.11 $                         \\ \hline
IEEE $300$-bus          & $[0.30 , 0.72 ] $          & ${[}-266.99,-3.04{]}  $       & $0.15 $                         \\ \hline
\end{tabular}
\end{table}
Next, let us define 
\begin{align*}
    \mathcal{S}_i := \sum \limits_{j \ne i} { V_i V_j Y_{ij} \sin \left( {\theta _{ij} - {\delta _i^*} + {\delta _j^*}} \right) } - \frac{D_i^2}{2M_i},
\end{align*}
and recall that according to condition \eqref{eq: condition for stability} in Theorem \ref{thm: stability properties}, if $\mathcal{S}_i\le0, \forall i\in\mathcal{N}$, then the EP of swing equations is asymptotically stable. The third column of Table \ref{tab: diffeent ieee cases} provides the domain of $\mathcal{S}_i$, i.e., $[\min_i \mathcal{S}_i, \max_i \mathcal{S}_i]$. Accordingly, $\mathcal{S}_i\le0$ holds for all test cases, except the IEEE $89$-bus system. Note that the corresponding EPs in these systems are all stable. While the evaluation of condition \eqref{eq: condition for stability} confirms the stability of EPs in all other cases, it gives an inconclusive answer in the IEEE $89$-bus case. However, here we show how condition \eqref{eq: condition for stability} can be used as a quantitative measure of the degree of stability. The positive values of $\mathcal{S}_i$ in the IEEE $89$-bus system pertain to the bus numbers $6233$, $6798$, $7960$, and $9239$, indicating that the stability of the system can be improved by making $\mathcal{S}_i$ negative in these buses via appropriate corrective actions. Exploring the structure of the system reveals that each of these buses is connected to the rest of the grid through a line with a relatively small resistance. As a corrective action, we change these resistances as follows: $r(659,9239)=6\times 10^{-5} \to 0.5\times 10^{-3}$, $r(659,7960)=6 \times 10^{-5} \to 1\times 10^{-3}$, $r(659,6233)=6\times 10^{-5} \to 2\times 10^{-3}$, and $r(659,6798)=7\times 10^{-5} \to 1.5\times 10^{-3}$, where all the values are in p.u. With this corrective action (which can be implemented through flexible AC transmission system (FACTS)
devices), we will have $\mathcal{S}_i\le0, \forall i\in\mathcal{N}$ and condition \eqref{eq: condition for stability} will hold true, certifying the stability of the system (see the test case IEEE $89$-bus mod. in Table \ref{tab: diffeent ieee cases}). 
Fig. \ref{fig: case1 bifurcation} depicts the spectrum of $J$ in the IEEE $89$-bus system before and after implementing the corrective actions. As can be seen, the magnitude of the imaginary parts of the eigenvalues in $\sigma(J)$ is reduced, and their real parts are mainly moved towards $-\infty$, thereby making the modified system less oscillatory. Evidently, condition \eqref{eq: condition for stability} increased the stability margins of the system.
%
Finally, $\lambda_2\in\sigma(J)$ denotes the closest nonzero eigenvalue of $J$ to the imaginary axis, and
the fourth column of Table \ref{tab: diffeent ieee cases} depicts this value in different cases.
Note that the proposed stability certificate can be fully parallelized, thereby making it even more reliable and resilient for real-time applications.
%
%
%
\begin{figure}[t]
\includegraphics[width=3.5in, keepaspectratio=true]{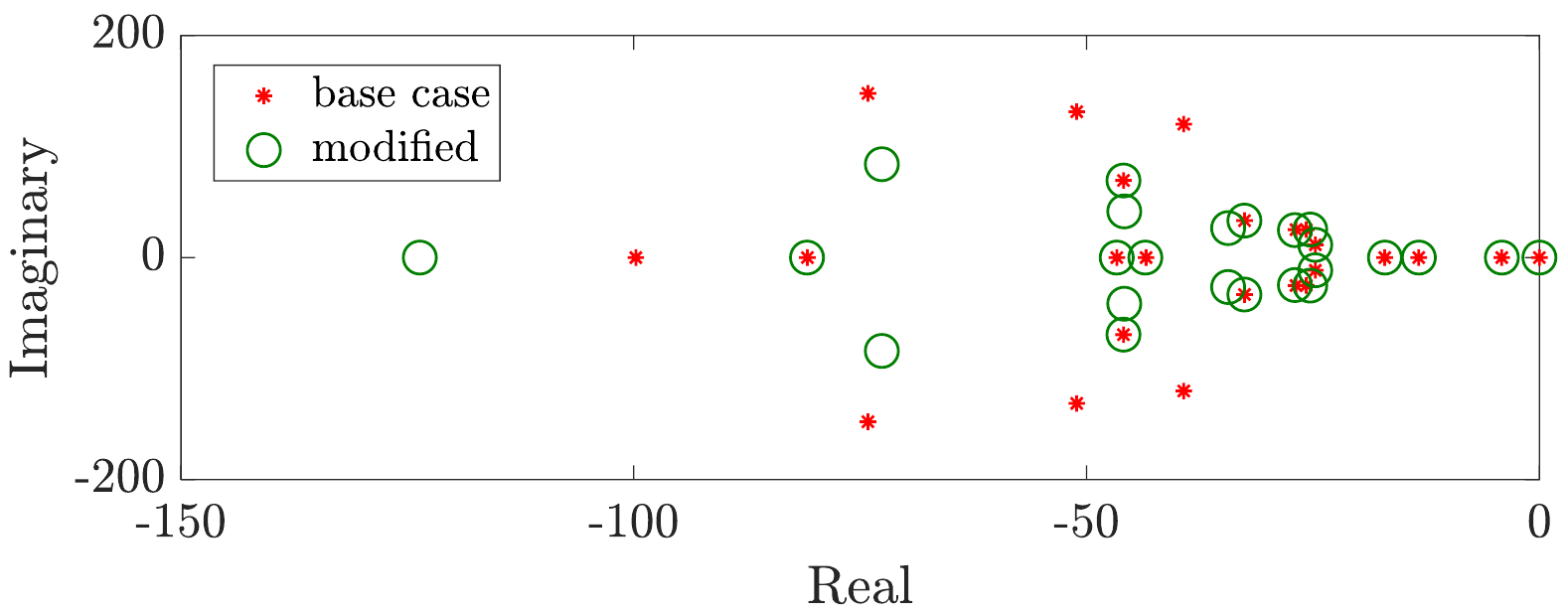}
\centering
\caption{Spectrum of $J$ in the IEEE $89$-bus system.}
\label{fig: case1 bifurcation}
\end{figure}

Next, we provide an example of an unstable EP and show how enforcing condition \eqref{eq: condition for stability} will make the EP stable. Consider the $3$-bus system in Fig. \ref{fig: schematic diagram of the $3$-bus test system} whose dynamic parameters and converged load flow data are provided in Table \ref{tab: Dynamic Parameters}. 
%
%
\begin{table} [t]
\centering
\caption{Dynamic parameters and converged load flow data of the $3$-bus test system.}
\label{tab: Dynamic Parameters}
\begin{tabular}{|c|c|c|c|c|c|c|}
\hline
$i$     & $M_i$ [sec.]   & $D_i$  & $P_{m_i}$ [p.u.]    & $V_i$ [p.u.]     & $\delta_i^*$ [rad]  &  $\mathcal{S}_i$   \\ \hline
$1$   & $6.1$       & $1.5$      & $0.89$      & $0.9$                   &  $-0.30$            &   $6.98$           \\ \hline
$2$   & $10$        & $1$        & $15.06$     & $0.9$                   &   $0.36$            &   $12.73$          \\ \hline
$3$   & $4.5$       & $1.8$      & $2.53$      & $0.913$                 &   $-0.12$           &   $8.91$         \\ \hline
\end{tabular}
\end{table}
As can be observed from the last column of Table \ref{tab: Dynamic Parameters}, we have $\mathcal{S}_i\ge0, \forall i\in\mathcal{N}$, i.e., condition \eqref{eq: condition for stability} is violated in all buses of this system, indicating that the system does not have sufficient stability margins. The instability of this EP can be verified through eigenvalue analysis and time domain simulation, as depicted in Fig. \ref{fig: unstable 3 gen}. In order to achieve stability, the power system operator can enforce condition \eqref{eq: condition for stability} either by moving the current EP to a new point (e.g., through adding constraint \eqref{eq: condition for stability} to the optimal power flow problem) or by making the current EP stable through adjusting the right-hand side of condition \eqref{eq: condition for stability}. Particularly, the latter is possible if we have inverter-based resources where the inertia $M_i$ and damping $D_i$ are adjustable parameters of their controllers. In this case, by setting $M=\mathbf{diag}(0.9,0.9,0.9)$ and $D=\mathbf{diag}(4.5,4.9,4.8)$, we would have $\mathcal{S}_1=-4.08$, $\mathcal{S}_2=-0.55$, and $\mathcal{S}_3=-3.52$, thereby certifying the stability of the system.
\begin{figure}[t]
 \includegraphics*[width=2.6in, keepaspectratio=true]{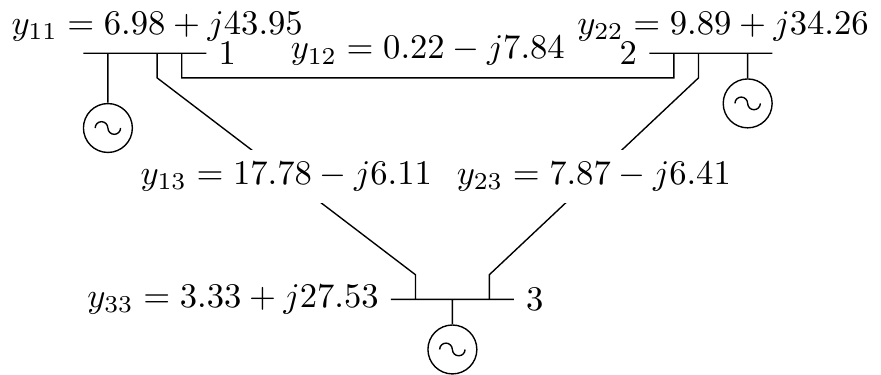}
 \centering
  \caption{Schematic diagram of the $3$-bus test system.}
  \label{fig: schematic diagram of the $3$-bus test system}
\end{figure}
\begin{figure}[t]
\begin{subfigure}{0.25\textwidth}
  \centering
  \includegraphics[width=\linewidth]{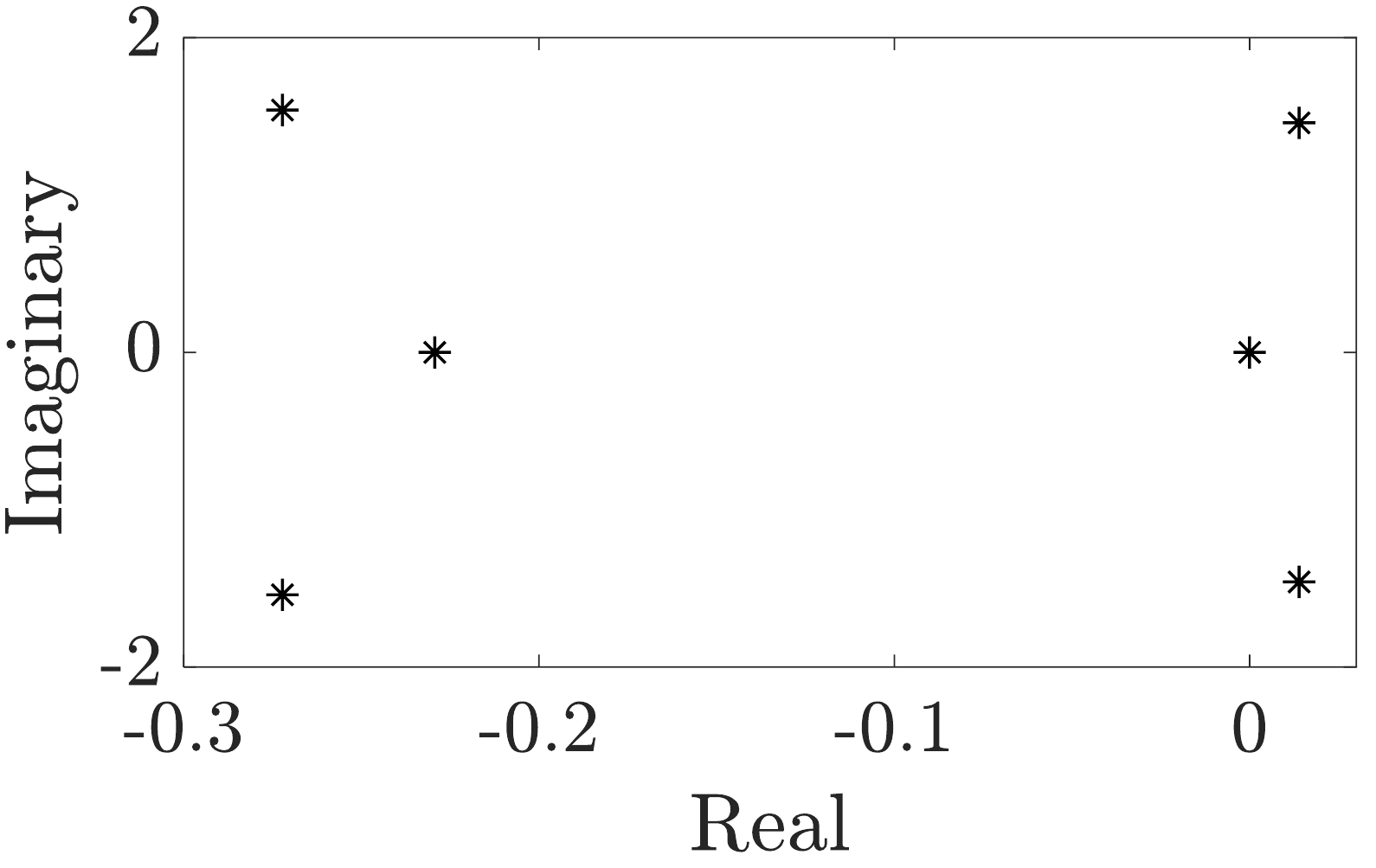}
  \caption{Eigenvalues of matrix $J$.}
  \label{fig: sfig1 unstable eigenvalues}
\end{subfigure}%
\begin{subfigure}{0.22\textwidth}
  \centering
  \includegraphics[width=\linewidth]{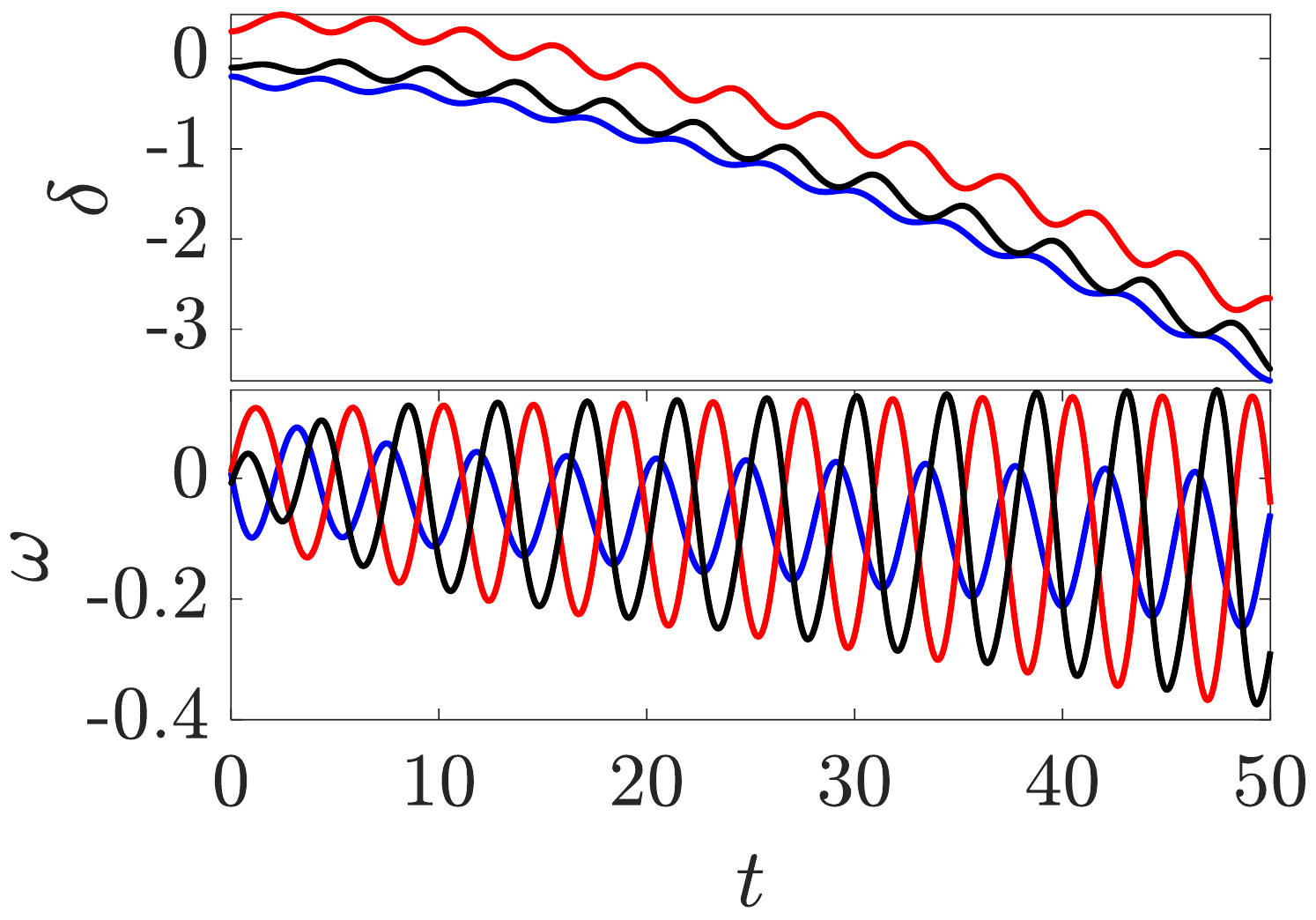}
  \caption{Trajectories of the system.}
  \label{fig: sfig2 unstable time sim}
\end{subfigure}
\caption{Instability of the EP in the $3$-bus test system. (a) There exist two eigenvalues with positive real part. (b) Starting from a neighborhood of the EP, the trajectories become unbounded.}
\label{fig: unstable 3 gen}
\end{figure}

{
We conclude our numerical experiments by further illustrating the effect of condition \eqref{eq: condition for stability} on the spectrum of matrix $J$. We have varied the operating point and parameters (inertia and damping) of the IEEE $9$-bus system, and for each operating point or parameter value we have recorded $\lambda_2$ as well as $\min_i \mathcal{S}_i$. Fig. \ref{fig: condition c qualitative behaviour} shows the relationship between $\lambda_2$ and $\min_i \mathcal{S}_i$ as the system operating point and parameters change. Accordingly, a smaller $\min_i \mathcal{S}_i$ yields a farther $\lambda_2$ from the  imaginary axis.
}
\begin{figure}[t]
 \includegraphics*[width=3.4in, keepaspectratio=true]{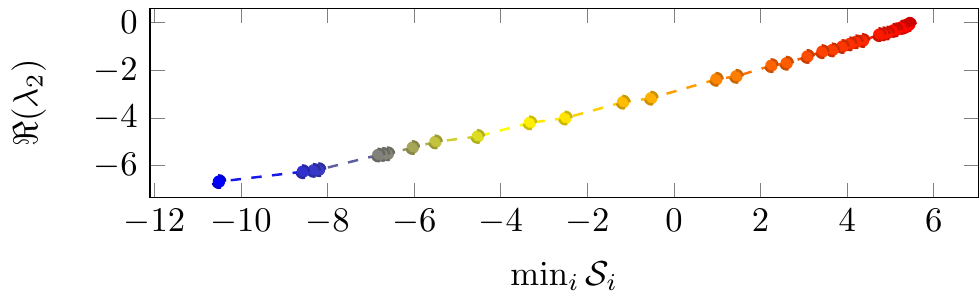}
 \centering
\caption{
{
Real part of the closest nonzero eigenvalue of $J$ to the imaginary axis as a function of $\min_i \mathcal{S}_i$ in the IEEE $9$-bus system.}
  \label{fig: condition c qualitative behaviour}
  }
\end{figure}

\section{Conclusions and Outlook} \label{Sec: Conclusions}
This paper is aimed at finding a computationally efficient way to certify the stability of power system EPs.
%
We have shown if the matrix norm of the Laplacian of the underlying graph is upper bounded by a specific value, then the EP is stable. The aforementioned upper bound is proportional to the square of damping and inverse of inertia at each node of the power grid. This fact also sheds light on the interplay of inertia, damping, and graph of the system, and provides profound insights into how power system should be designed and operated to be stable. 
{
A worthwhile direction for future research would be extending condition \eqref{eq: condition for stability} as a function of network connectivity measure.
}
\appendices
\section{Proof of Theorem \ref{thm: stability properties}}
\label{proof of thm: stability properties}
    We complete the proof in three steps:\\
    \textbf{Step 1:} First, we show that the zero eigenvalue of $J$ is simple.
    According to Proposition \ref{prop: quasi-M}, if $[\delta^{*},\omega^{*}]\in \Omega$, the Jacobian matrix $L$ is a singular M-matrix, and consequently, it has at least one zero eigenvalue.
    Consider the weighted directed graph $\mathcal{G}=(\mathcal{N},\mathcal{A},\mathcal{W})$ constructed in the beginning of Section \ref{subsec: Spectral Properties of the Jacobian Matrix L}. 
    {If $[\delta^{*},\omega^{*}]\in \Omega$, the arc weights $w_{ij}$ are positive for all arcs $(i,j)\in\mathcal{A}$. Moreover, there are two arcs $(i,j)$ and $(j,i)$ between nodes $i$ and $j$ if and only if the two nodes are connected in the underlying undirected graph of the power grid. Therefore, if the underlying undirected graph of the power grid is connected, then the directed graph $\mathcal{G}$ is strongly connected. Now, we need the following lemma from graph theory to complete the proof: consider a weighted directed graph $\mathcal{G}$ with positive weights. If $\mathcal{G}$ is strongly connected, then the zero eigenvalue of its Laplacian is simple (see \cite{2018-Dorfler-algebraic-graph-theory} and references therein). Note that the geometric multiplicity of the zero eigenvalue in $\sigma(J)$ and $\sigma(L)$ are equal.
    }
    
    


    \textbf{Step 2:} Next, we show all the nonzero real eigenvalues of $J$ are negative.
    Let $\lambda\in\mathbb{R}$ be an eigenvalue of $J$, then according to Lemma \ref{lemma: relation between ev J and ev J11},
            \begin{align} \label{eq: pencil det zero}
                \mathbf{det}\left( L + \lambda D + \lambda^2 M \right)  = 0.
            \end{align}
    Consider the Gershgorin disk $\mathbb{D}_i$ centered at $c_i:=[L]_{ii}+\lambda D_i + \lambda^2 M_i$ with radius $r_i:= [L]_{ii} = \sum_{j\ne i} |[L]_{ij}|$. According to the Gershgorin circle theorem, every eigenvalue of the matrix $L + \lambda D + \lambda^2 M$ lies within at least one of the discs $\mathbb{D}_i, \forall i \in \mathcal{N}$. Now assume for the sake of contradiction that $\lambda>0$, but this implies that $c_i>r_i, \forall i \in \mathcal{N}$, and consequently none of the Gershgorin disks contains the origin (i.e., $0$ cannot be an eigenvalue), contradicting \eqref{eq: pencil det zero}.

    \textbf{Step 3:} Finally, we show if condition \eqref{eq: condition for stability} holds, then the nonzero eigenvalues of $J$ are located in the left half plane. This result holds for real nonzero eigenvalues of $J$, as shown in the previous step.
    Now let $\lambda\in\mathbb{C},  \lambda\in\sigma(J)$, then according to Lemma \ref{lemma: relation between ev J and ev J11}, $\exists v\in\mathbb{C}^n,v\ne0$ such that
            \begin{align} \label{eq: pencil singularity theorem proof}
                \left( L + \lambda D + \lambda^2 M \right) v = 0.
            \end{align}
    It is always possible to normalize $v$ such that $\max_{i\in\mathcal{N}} |v_i| = 1$. {Here and in the rest of this proof, if $x\in\mathbb{C}$, then $|x|$ denotes the modulus of $x$.} Let $k:=  \mathrm{argmax}_{i\in\mathcal{N}} |v_i|$, and spell out the $k$-th row of \eqref{eq: pencil singularity theorem proof}:
            \begin{align} \label{eq: pencil singularity theorem proof k-th row}
                \sum_{i\in\mathcal{N}} [L]_{ki}v_i + \lambda D_k v_k + \lambda^2 M_kv_k   = 0,
            \end{align}
    which can be rewritten as
            \begin{align} \label{eq: pencil singularity theorem proof k-th row rewrite}
                [L]_{kk}v_k + \lambda D_k v_k + \lambda^2 M_kv_k   = -\sum_{i\in\mathcal{N}, i\ne k} [L]_{ki}v_i.
            \end{align}
    Using the triangle inequality, we have
    \begin{align*}
      \bigl\lvert-\sum_{i\in\mathcal{N}, i\ne k} [L]_{ki}v_i \bigr\rvert \le  
      \sum_{i\in\mathcal{N}, i\ne k} \bigl\lvert [L]_{ki}\bigr\rvert \bigl\lvert v_i \bigr\rvert \le 
      \sum_{i\in\mathcal{N}, i\ne k} \bigl\lvert [L]_{ki}\bigr\rvert.
    \end{align*}
    Let us also define $\mathcal{R}:=\sum_{i\in\mathcal{N}, i\ne k} \bigl\lvert [L]_{ki}\bigr\rvert$.
    Now assume that $\lambda=\alpha+j\beta$ with $\alpha\ge0, \beta \ne 0$ is a nonzero eigenvalue of $J$, and let us lead this assumption to a contradiction. Equation \eqref{eq: pencil singularity theorem proof k-th row rewrite} implies that
    \begin{align*} 
     \mathcal{R}^2 \ge & \bigl\lvert  [L]_{kk}v_k + \lambda D_k v_k + \lambda^2 M_kv_k \bigr\rvert^2 \\
     =&  \bigl\lvert  [L]_{kk} + \lambda D_k  + \lambda^2 M_k \bigr\rvert^2 \bigl\lvert v_k\bigr\rvert^2 \\
     =&   \bigl\lvert  [L]_{kk} + \alpha D_k + (\alpha^2 -\beta^2) M_k +j(2\alpha\beta M_k +\beta D_k)      \bigr\rvert^2 \\
     %
     %
     =& [L]_{kk}^2 + (\alpha D_k + (\alpha^2 -\beta^2) M_k)^2 + 2 [L]_{kk} (\alpha D_k +\alpha^2 M_k) \\
      & - 2 [L]_{kk}\beta^2 M_k  + 4\alpha^2\beta^2 M_k^2 +\beta^2 D_k^2 + 4\alpha\beta^2 M_k D_k.
    \end{align*}
    Recall that if $[\delta^{*},\omega^{*}]\in \Omega$, matrix $L$ has zero row sum, i.e., $\mathcal{R} = [L]_{kk}$. By cancelling $\mathcal{R}^2$ and $[L]_{kk}^2$ terms and moving $2[L]_{kk}\beta^2 M_k$ and $\beta^2 D_k^2$ to the left-hand side, we arrive at
    \begin{align} \label{eq: ineq in proof of lossy}
    \notag  \beta^2 (2[L]_{kk} M_k - D_k^2) \ge &  (\alpha D_k + (\alpha^2 -\beta^2) M_k)^2 \\
    \notag  &   + 2 [L]_{kk} (\alpha D_k +\alpha^2 M_k) \\
      &   + 4\alpha^2\beta^2 M_k^2 + 4\alpha\beta^2 M_k D_k.
    \end{align}
    According to our assumption in condition \eqref{eq: condition for stability}, we have $(2[L]_{kk} M_k - D_k^2)\le0$, thus the left-hand side of the inequality \eqref{eq: ineq in proof of lossy} is nonpositive. If $\alpha\ge0$ and $\beta\ne0$, the right-hand side of \eqref{eq: ineq in proof of lossy} would be positive, which is the desired contradiction.
    The idea used in this part of the proof was inspired by Skar \cite{1980-Skar-stability-thesis}.
    Note that the simple zero eigenvalue of the Jacobian matrix $J$ stems from the translational invariance of the flow function \eqref{eq: flow function}. As mentioned earlier, we can eliminate this eigenvalue by choosing a reference bus and refer all other bus angles to it. Therefore, the set of EPs $\{ \delta^* + \alpha \mathbf{1}: \alpha \in \mathbb{R} \}$ will collapse into one EP. Such an EP will be asymptotically stable. 

\end{document}